\documentclass{article}
\usepackage{amsmath}
\usepackage{amsthm, amssymb }
\theoremstyle{plain}
\newtheorem{theorem}{Theorem}

\newtheorem{claim}[theorem]{Claim}

\theoremstyle{definition}

\usepackage{lmodern}

\title{A Determinantal Approach to a Sharp $\ell^1-\ell^\infty-\ell^2$ Norm Inequality}
\author{Jose Antonio Lara Benitez\thanks{Corresponding author: \texttt{antonio.lara@rice.edu, antonio.lara.benitez.05@gmail.com}.\\Rice University}}
\date{}

\begin{document}
\maketitle

\begin{abstract}
We give a short linear--algebraic proof of the inequality
\[
\|x\|_1\,\|x\|_\infty \le \frac{1+\sqrt{p}}{2}\,\|x\|_2^2,
\]
valid for every \(x\in\mathbb{R}^p\). This inequality relates three fundamental norms on finite-dimensional spaces and has applications in optimization and numerical analysis. Our proof exploits the determinantal structure of a parametrized family of quadratic forms, and we show the constant $(1+\sqrt{p})/2$ is optimal.
\end{abstract}

\section{Introduction}

Inequalities relating different norms on $\mathbb{R}^p$ are fundamental in analysis, optimization, and numerical linear algebra. Among the most basic are the equivalences between the $\ell^1$, $\ell^2$, and $\ell^\infty$ norms. While the triangle inequality and Cauchy--Schwarz inequality immediately yield $\|x\|_2 \le \|x\|_1 \le \sqrt{p}\,\|x\|_2$ and $\|x\|_\infty \le \|x\|_2 \le \sqrt{p}\,\|x\|_\infty$, mixed products of these norms are less transparent.

The inequality
\begin{equation}\label{eq:main}
\|x\|_1\,\|x\|_\infty \le \frac{1+\sqrt{p}}{2}\,\|x\|_2^2
\end{equation}
provides a sharp upper bound for the product $\|x\|_1\,\|x\|_\infty$ in terms of the squared Euclidean norm. Such bounds arise naturally when analyzing convergence rates of coordinate descent methods, bounding condition numbers, and studying sparsity-promoting regularization in statistics and machine learning \cite{beck2017first, nesterov2012efficiency}.

The inequality \eqref{eq:main} can be verified by calculus-based optimization over the unit sphere, but the resulting Lagrange multiplier equations are cumbersome. In this note we give an elementary proof using only linear algebra: we reformulate the problem as asking when a certain parametrized quadratic form is nonnegative, construct its representing matrix explicitly, and compute its determinant via elementary column operations. The optimal constant emerges naturally as the root of a quadratic equation.

Our approach illustrates a general technique: many norm inequalities can be recast as questions about positive semidefiniteness, which can then be settled by determinantal calculations \cite{horn2012matrix}. We hope this method proves useful in other contexts.

\section{Proof of the Inequality}

Let \(x\in\mathbb{R}^p\). Recall the standard definitions
\[
\|x\|_1=\sum_{i=1}^p|x_i|,\qquad
\|x\|_2^2=\sum_{i=1}^p x_i^2,\qquad
\|x\|_\infty=\max_{1\le i\le p}|x_i|.
\]
Without loss of generality assume \(|x_1|=\|x\|_\infty\) (relabel coordinates if necessary) and set \(u_i=|x_i|\) for \(i=1,\dots,p\). Then proving \eqref{eq:main} reduces to showing
\begin{equation}\label{eq:uform}
u_1\sum_{i=1}^p u_i \le \frac{1+\sqrt{p}}{2}\sum_{i=1}^p u_i^2
\end{equation}
for all $u\in\mathbb{R}^p$ with $u_1,\dots,u_p\ge0$ and $u_1 = \max_i u_i$.

Let \(C>0\) be a constant to be determined. The inequality \eqref{eq:uform} is equivalent to the nonnegativity of the quadratic form
\begin{equation}\label{eq:QC}
Q_C(u)=C\sum_{i=1}^p u_i^2 - u_1\sum_{i=1}^p u_i
\end{equation}
for all $u$ in the first orthant with $u_1\ge u_i$ for all $i$. 

Expanding \eqref{eq:QC}, we obtain
\begin{equation}\label{eq:expanded}
Q_C(u) = (C-1)u_1^2 - u_1u_2 - \cdots - u_1u_p + Cu_2^2 + \cdots + Cu_p^2.
\end{equation}
From this expression, the symmetric matrix associated with the quadratic form $Q_C$ is
\[
Q_p(C)=
\begin{pmatrix}
C-1 & -\tfrac12 & -\tfrac12 & \cdots & -\tfrac12\\[1mm]
-\tfrac12 & C & 0 & \cdots & 0\\
-\tfrac12 & 0 & C & \ddots & \vdots\\
\vdots & \vdots & \ddots & \ddots & 0\\
-\tfrac12 & 0 & \cdots & 0 & C
\end{pmatrix},
\]
so that $Q_C(u) = u^T Q_p(C) u$. If $Q_p(C)$ is positive semidefinite, then $Q_C(u)\ge 0$ for all $u$, establishing \eqref{eq:uform}. We will determine the smallest $C$ for which $Q_p(C)\ge 0$.

Let \(D_k(C)\) denote the determinant of the leading \(k\times k\) principal submatrix of \(Q_p(C)\). This submatrix has the same structure as $Q_k(C)$.

\begin{claim}\label{claim:determinant}
For any $k \ge 2$,
\[
D_k(C) = C^{k-2} \left( C^2 - C - \frac{k-1}{4} \right).
\]
\end{claim}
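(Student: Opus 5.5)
The leading $k\times k$ block of $Q_p(C)$ is an arrowhead matrix: entry $C-1$ in the corner, $-\tfrac12$ along the rest of the first row and column, and $C$ on the rest of the diagonal. My plan is to compute its determinant directly by eliminating the off-diagonal entries of the first row. This uses the elementary column operations promised in the introduction, and it avoids induction altogether.

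First assume $C\neq 0$. For each $j=2,\dots,k$, add $\frac{1}{2C}$ times column $j$ to column $1$. Column $j$ has $-\tfrac12$ in row $1$ and $C$ in row $j$, with zeros elsewhere. The operation therefore kills the entry $-\tfrac12$ in position $(j,1)$, since $-\tfrac12 + \frac{1}{2C}\cdot C=0$. At the same time it changes the $(1,1)$ entry by $\frac{1}{2C}\cdot(-\tfrac12)=-\frac{1}{4C}$.

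After all $k-1$ operations the matrix is upper triangular. Its diagonal is
\[
C-1-\frac{k-1}{4C},\;C,\;\dots,\;C,
\]
and the determinant is unchanged. Hence
\[
D_k(C)=C^{k-1}\Bigl(C-1-\frac{k-1}{4C}\Bigr)=C^{k-2}\Bigl(C^2-C-\frac{k-1}{4}\Bigr).
\]

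The only delicate point is the division by $C$. Both sides are polynomials in $C$ and agree for every $C\neq0$, so they agree identically; alternatively, one can note that $C>0$ in the paper's setting anyway. If a division-free argument is preferred, there is a fallback: expand along the last row and induct on $k$, which gives
\[
D_k = C\,D_{k-1}-\tfrac14 C^{k-2}.
\]
The base case is $D_2=C(C-1)-\tfrac14$, and the recursion closes immediately. I expect no real obstacle; the only step needing care is the bookkeeping of the cofactor sign in that fallback recursion.
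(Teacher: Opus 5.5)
Your proposal is correct and follows the paper's proof essentially verbatim: the same column operation $\mathbf{v}_1 \mapsto \mathbf{v}_1 + \sum_{j\ge 2}\frac{1}{2C}\mathbf{v}_j$ triangularizes the matrix with diagonal $C-1-\frac{k-1}{4C}, C, \dots, C$. Your remark that the identity extends to $C=0$ by polynomiality closes a small point the paper leaves implicit. Your fallback recursion $D_k = C\,D_{k-1} - \tfrac14 C^{k-2}$ is also correct as stated.
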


\begin{proof}
We compute the determinant of $Q_k(C)$ by performing elementary column operations to triangularize the matrix. Let $\mathbf{v}_j$ denote the $j$-th column of $Q_k(C)$. We replace the first column $\mathbf{v}_1$ with
\[
\mathbf{v}_1' = \mathbf{v}_1 + \sum_{j=2}^k \frac{1}{2C} \mathbf{v}_j.
\]
For any row $i \ge 2$, the entry in the first column is $-\frac{1}{2}$, and the entry in the $j$-th column is $C$ if $i=j$ and $0$ otherwise. Thus, the new entry at position $(i,1)$ is
\[
-\frac{1}{2} + \frac{1}{2C}(C) = 0.
\]
This clears all entries in the first column below the diagonal. The entry at position $(1,1)$ becomes
\[
(C-1) + \sum_{j=2}^k \frac{1}{2C}\left(-\frac{1}{2}\right) = (C-1) - \frac{k-1}{4C}.
\]
The resulting matrix is upper triangular. The determinant is the product of its diagonal entries:
\[
D_k(C) = \left( (C-1) - \frac{k-1}{4C} \right) \cdot C^{k-1} = C^{k-2} \left( C(C-1) - \frac{k-1}{4} \right),
\]
which simplifies to the claimed formula.
\end{proof}

For $k=p$, we have
\[
D_p(C) = C^{p-2}\left(C^2-C-\tfrac{p-1}{4}\right).
\]
The quadratic $C^2-C-\tfrac{p-1}{4}$ has roots
\[
C = \frac{1\pm\sqrt{1+p-1}}{2} = \frac{1\pm\sqrt{p}}{2}.
\]
Set
\[
\varphi=\frac{1+\sqrt{p}}{2}>0.
\]
Then
\[
D_p(C) = C^{p-2}\left(C-\varphi\right)\left(C-\varphi+\sqrt{p}\right).
\]

For any $k\le p$, substituting $\varphi$ into the closed form:
\[
D_k(\varphi)=\varphi^{k-2}\left(\varphi^2-\varphi-\tfrac{k-1}{4}\right) = \varphi^{k-2}\cdot\frac{p-k}{4}\ge 0,
\]
with equality when $k=p$ and strict inequality for $k<p$.

Furthermore, note that $\varphi_k := \tfrac{1+\sqrt{k}}{2}$ is increasing in $k$, so $\varphi_p = \max_{k\le p} \varphi_k$. For $C>\varphi_p$, all leading principal minors $D_k(C)>0$ for $k=1,\ldots,p$. By Sylvester's criterion, $Q_p(C)$ is positive definite for all $C>\varphi_p$.

\begin{claim}\label{claim:boundary}
$Q_p(\varphi_p)\ge 0$.
\end{claim}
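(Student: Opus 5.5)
The natural route is a limiting argument from the positive definite range already established, with a direct verification as a backup. We have just shown, via Claim~\ref{claim:determinant} and Sylvester's criterion, that $Q_p(C)$ is positive definite for every $C>\varphi_p$. For any fixed $u\in\mathbb{R}^p$, the map $C\mapsto u^TQ_p(C)u = C\|u\|_2^2 - u_1\sum_i u_i$ is affine, hence continuous, in $C$. So I would fix $u$, note that $u^TQ_p(C)u>0$ for all $C>\varphi_p$ (or $\ge 0$ when $u=0$), and let $C\downarrow\varphi_p$ to get $u^TQ_p(\varphi_p)u\ge 0$. This is exactly positive semidefiniteness of $Q_p(\varphi_p)$. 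Equivalently, the cone of positive semidefinite matrices is closed and $Q_p(C)\to Q_p(\varphi_p)$ entrywise.

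As a sanity check, and to identify the equality case for the later sharpness statement, I would also verify the claim directly by completing the square. Write $s=u_2+\dots+u_p$. Cauchy--Schwarz gives $\sum_{i\ge2}u_i^2\ge s^2/(p-1)$, so
\[
Q_C(u)\ge (C-1)u_1^2 - u_1 s + \frac{C}{p-1}s^2 .
\]
The right-hand side is a binary quadratic form in $(u_1,s)$. It is nonnegative precisely when $C-1\ge0$ and $4C(C-1)\ge p-1$, that is, when $C^2-C-\tfrac{p-1}{4}\ge0$. At $C=\varphi_p$ this discriminant condition holds with equality, because $\varphi_p$ is a root of the quadratic. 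We also have $\varphi_p-1\ge0$ for $p\ge1$; when $p=1$, $Q_1(1)=0$ trivially. Since the bound uses only Cauchy--Schwarz, it holds for all real $u$, so it confirms semidefiniteness without invoking Sylvester.

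The main subtlety, rather than any real obstacle, is that Sylvester's criterion with leading minors does not certify semidefiniteness when a minor vanishes, and $D_p(\varphi_p)=0$ here. So one cannot simply cite the computation $D_k(\varphi)\ge0$; this is why I route the argument through the limit from the definite case, or through the explicit completion of squares. The direct computation also shows that equality holds when $u_2=\dots=u_p$ and $(u_1,s)$ lies in the kernel of the binary form. This is the extremal vector needed to show the constant cannot be improved.
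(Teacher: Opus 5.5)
Your proposal is correct, and both of your arguments work. Your main argument has the same shape as the paper's: pass from positive definiteness for $C>\varphi_p$ to the boundary value by continuity. You carry it out more economically, though. The paper assumes a negative eigenvalue at $\varphi_p$, uses continuity of eigenvalues in $C$ and the intermediate value theorem to produce $c'>\varphi_p$ with a zero eigenvalue, and contradicts $D_p(c')>0$. You instead fix $u$ and use that $u^TQ_p(C)u$ is affine in $C$ (indeed $Q_p(C)=Q_p(\varphi_p)+(C-\varphi_p)I$), so no eigenvalue tracking or IVT is needed.

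Your Cauchy--Schwarz verification is a genuinely different route. Replacing $u_2,\dots,u_p$ by $s=\sum_{i\ge2}u_i$ reduces everything to a $2\times2$ form in $(u_1,s)$ whose determinant condition is exactly $C^2-C-\tfrac{p-1}{4}\ge0$. This bypasses Claim~\ref{claim:determinant} and Sylvester's criterion entirely and shows transparently why $\varphi_p$ is the threshold. Its equality case also yields the extremal vector $(1,t,\dots,t)$ with $t=1/(1+\sqrt p)$ used in the optimality section. The paper's route, by contrast, stays inside its determinantal framework and reuses the minor computation it has already done.

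On your closing remark: nonnegative leading minors do not imply semidefiniteness in general. Here, however, $D_k(\varphi_p)>0$ for every $k<p$, including $D_1(\varphi_p)=\varphi_p-1>0$ when $p\ge2$. So the leading $(p-1)\times(p-1)$ block is positive definite, and its Schur complement $D_p(\varphi_p)/D_{p-1}(\varphi_p)=0$ is nonnegative. That gives $Q_p(\varphi_p)\ge0$ directly, with no limiting argument at all.
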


\begin{proof}
We prove that all eigenvalues of $Q_p(\varphi_p)$ are nonnegative. Since $Q_p(C)$ is symmetric, its eigenvalues are real and depend continuously on $C$. 

As shown above, for all $C>\varphi_p$, the matrix $Q_p(C)$ is positive definite, so all its eigenvalues are strictly positive. 

Suppose for contradiction that $Q_p(\varphi_p)$ has a negative eigenvalue, say $\lambda(\varphi_p)<0$. Fix $C_0>\varphi_p$. Then $\lambda(C_0)>0$ and $\lambda(\varphi_p)<0$. By the intermediate value theorem applied to the continuous function $C\mapsto \lambda(C)$, there exists $c'\in(\varphi_p, C_0)$ such that $\lambda(c')=0$. 

But this means $\det(Q_p(c')) = 0$, contradicting the fact that $D_p(C)>0$ for all $C>\varphi_p$. Therefore all eigenvalues of $Q_p(\varphi_p)$ are nonnegative, i.e., $Q_p(\varphi_p)\ge 0$.
\end{proof}

By Claim~\ref{claim:boundary}, $Q_{\varphi_p}(u)\ge 0$ for all $u\in\mathbb{R}^p$. Therefore
\[
u_1\sum_{i=1}^p u_i \le \varphi_p\sum_{i=1}^p u_i^2.
\]
Recalling $u_i=|x_i|$, $u_1=\|x\|_\infty$, and $\varphi_p=(1+\sqrt{p})/2$, this establishes \eqref{eq:main}. \qed

\section{Optimality of the Constant}

To show the constant $\varphi_p = (1+\sqrt{p})/2$ is optimal, it suffices to find a non-zero vector $u$ such that equality holds in \eqref{eq:uform}. This corresponds to finding a non-trivial solution to $Q_p(\varphi_p)u = 0$.

Since $D_p(\varphi_p)=0$, the matrix $Q_p(\varphi_p)$ is singular and has a non-trivial kernel. Due to the symmetry of the matrix indices $2,\dots,p$, we look for a vector of the form $u = (1, t, t, \dots, t)^T$.

Considering the second row of the system $Q_p(\varphi_p)u = 0$, we have
\[
-\frac{1}{2}(1) + \varphi_p t = 0 \implies t = \frac{1}{2\varphi_p}.
\]
Substituting $\varphi_p = (1+\sqrt{p})/2$, we obtain
\[
t = \frac{1}{1+\sqrt{p}} = \frac{\sqrt{p}-1}{p-1}.
\]
Note that for $p \ge 2$, we have $\varphi_p > 1$, which implies $t < 1/2 < 1$. Thus $u_1 = 1$ is indeed the maximum entry $\|u\|_\infty$, consistent with our assumptions.

This vector $u$ yields $Q_{\varphi_p}(u)=0$, proving that equality is attained and the constant cannot be improved.

\section*{Acknowledgments}
The author is deeply grateful to Prof. Anastasios Kyrillidis for bringing this problem to their attention.

\bibliographystyle{amsplain}
\bibliography{ref}

\end{document}